\date{}
\title{On bounded degree graphs with large size-Ramsey numbers}
\author{
Konstantin Tikhomirov
}
\address{
\medskip
\noindent
Department of Mathematical Sciences\\
Carnegie Mellon University\\
Wean Hall 6113\\
Pittsburgh, PA 15213\\
\texttt{\small
e-mail:   ktikhomi@andrew.cmu.edu}
}
\thanks{The work is partially supported by the NSF Grant DMS 2054666}
\newtheorem{theorem}{Theorem}[section]
\newtheorem*{theorem*}{Theorem}
\newtheorem{lemma}[theorem]{Lemma}
\newtheorem{cor}[theorem]{Corollary}
\newtheorem{defi}[theorem]{Definition}
\theoremstyle{definition}
\newtheorem{Remark}[theorem]{Remark}
\newtheorem*{Remark*}{Remark}
\theoremstyle{plain}
\newcommand{\Event}{\mathcal{E}}
\def\Prob{{\mathbb P}}
\begin{document}

\maketitle

\begin{abstract}
The size-Ramsey number $\hat r(G')$ of a graph $G'$ is defined as the smallest integer $m$ so that
there exists a graph $G$ with $m$ edges such that every $2$--coloring of the edges of $G$
contains a monochromatic copy of $G'$.
Answering a question of Beck, R\"{o}dl\ and\ Szemer\'{e}di showed that for every $n\geq 1$
there exists a graph $G'$ on $n$ vertices each of degree at most
three, with size-Ramsey number at least $cn\log^{\frac{1}{60}}n$
for a universal constant $c>0$. 
In this note we show that a modification of R\"{o}dl\ and\ Szemer\'{e}di's construction leads to a bound
$\hat r(G')\geq cn\,\exp(c\sqrt{\log n})$.
\end{abstract}

\section{Introduction}

The size-Ramsey number of a graph $G'$ can be viewed as the number of edges
in a most economical ``robust version'' of $G'$, a graph $G$ such that every $2$--coloring of the edges of $G$
contains a monochromatic copy of $G'$ \cite{EFRS1978}.
In \cite{Beck1990}, Beck asked whether every bounded degree graph has size-Ramsey number linear
in the number of its vertices. The question was answered negatively by R\"{o}dl\ and\ Szemer\'{e}di
in \cite{RS2000} who constructed for every $n\geq 1$ a graph $G'$ on $n$ vertices
with the maximum degree at most three, such that $\hat r(G')\geq cn\log^{\frac{1}{60}}n$.
The authors of \cite{RS2000} further conjectured that for every $d\geq 3$ there is
a number $\varepsilon=\varepsilon(d)>0$ such that for all sufficiently large $n$,
$$
n^{1+\varepsilon}\leq
\max\big\{\hat r(G'):\;G'\mbox{ has $n$ vertices, each of degree at most $d$}\big\}
\leq n^{2-\varepsilon}.
$$
The conjecture was partially confirmed in \cite{KRSS2011}
where it was shown that
$$
\max\big\{\hat r(G'):\;G'\mbox{ has $n$ vertices, each of degree at most $d$}\big\}
\leq n^{2-1/d+o(1)}.
$$
We refer to papers \cite{CNT2022,DP2022} for improvements of the upper bound
as well as further references regarding size-Ramsey numbers.
Whereas there has been substantial progress on estimating size-Ramsey numbers of
sparse graphs from above, the lower bound $cn\log^{\frac{1}{60}}n$ given
in \cite{RS2000} seems to be the best known estimate as of this writing.
The purpose of this note is to show that a modification of R\"{o}dl\ and\ Szemer\'{e}di's construction
leads to an improvement of the lower bound:
\begin{theorem}\label{alijnfaifjnfiwjn}
For every $n\geq 1$ there is a graph $G'$ on $n$ vertices of maximum degree at most three such that
$\hat r(G')\geq cn\,\exp(c\sqrt{\log n})$, for a universal constant $c>0$.
\end{theorem}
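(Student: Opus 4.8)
The plan is to follow the Rödl–Szemerédi strategy but sharpen the quantitative trade-off at its core. Their construction produces a sparse bounded-degree graph $G'$ (built from random-like bipartite pieces with good expansion/girth properties), and proves the lower bound on $\hat r(G')$ by a counting/probabilistic argument: one assumes a candidate host graph $G$ with few edges, takes a random $2$-coloring of $E(G)$, and shows that with positive probability no monochromatic copy of $G'$ survives. The key tension is between two parameters — how ``locally dense'' $G'$ is forced to be (which makes monochromatic copies hard to find in a sparse $G$) versus how many vertices $G'$ consumes. In the original paper the exponent $1/60$ is an artifact of a fairly lossy balancing of these parameters. My first step would be to isolate the exact inequality in \cite{RS2000} that yields $\log^{1/60} n$ and re-examine whether the building blocks can be iterated or nested to compound the gain.

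\textbf{The main idea} is a recursive/product-type construction. Instead of a single layer of expander-like gadgets, I would stack roughly $\sqrt{\log n}$ layers, each contributing a constant multiplicative factor to the size-Ramsey lower bound, so that the cumulative factor becomes $\exp(c\sqrt{\log n})$ rather than a fixed power of $\log n$. Concretely: let $G'_0$ be a constant-size gadget with $\hat r(G'_0) \geq (1+\delta)|V(G'_0)|$ for some absolute $\delta>0$; then define $G'_{k+1}$ by substituting a copy of $G'_k$ (or an appropriately attached pendant structure) into each vertex or edge of a fresh expander gadget, while carefully preserving the maximum-degree-three constraint — this is where one must be careful, splitting any vertex of would-be degree exceeding three into a short path or small tree. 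After $t$ levels one has $|V(G'_t)| \approx C^t$ and, if each level multiplies the size-Ramsey lower bound by $(1+\delta)$, then $\hat r(G'_t) \gtrsim (1+\delta)^t |V(G'_t)|$. Choosing $t \asymp \sqrt{\log n}$ gives $n = |V(G'_t)| \approx C^{\sqrt{\log n'}}$ for the appropriate relabeling, and $(1+\delta)^t = \exp(c t) = \exp(c\sqrt{\log n})$, as desired. For general $n$ one pads with an arbitrary bounded-degree graph on the remaining vertices, which does not decrease the size-Ramsey number.

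\textbf{The step I expect to be the main obstacle} is proving that the size-Ramsey lower bound genuinely multiplies at each level — i.e.\ a ``tensorization'' or ``amplification'' lemma of the form $\hat r(G'_{k+1}) \geq (1+\delta)\,\hat r(G'_k)\cdot(\text{something})$ that survives $\sqrt{\log n}$ iterations without the error terms swamping the gain. In the Rödl–Szemerédi framework the lower bound comes from a delicate second-moment / deletion argument on a random coloring of the hypothetical host $G$; to iterate it one needs the statement to be robust enough that the host graph for $G'_{k+1}$ can be ``projected'' onto a host structure for $G'_k$, losing only a controlled constant factor in edge count at each step. This likely requires reformulating the lower bound not merely as a number but as a statement about the existence of colorings with no monochromatic copy in \emph{any} graph of bounded edge-density relative to $|V(G')|$ — a self-strengthening statement amenable to induction. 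A secondary technical point is maintaining simultaneously, across all levels, the degree bound $\Delta(G'_t)\leq 3$, bounded local density (bounded number of edges in any small ball), and the expansion used in the coloring argument; the vertex-splitting needed to enforce $\Delta\leq 3$ must not destroy the expansion, which forces the gadgets to be designed with this splitting built in from the start. If the clean multiplicative amplification fails, a fallback is to show a weaker recursion $\hat r(G'_{k+1})/|V(G'_{k+1})| \geq \hat r(G'_k)/|V(G'_k)| + \delta$ (additive gain in the normalized quantity per level), which still yields $\exp(c\sqrt{\log n})$ after the right number of iterations, and may be easier to extract directly from the existing machinery in \cite{RS2000}.
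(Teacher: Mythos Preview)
Your proposal rests entirely on an amplification lemma --- that substituting $G'_k$ into a fixed gadget multiplies (or at least additively shifts) the normalized size-Ramsey number by an absolute constant --- and you correctly flag this as the obstacle. It is a real one: size-Ramsey numbers have no known submultiplicative behaviour under vertex- or edge-substitution, and the deletion argument in \cite{RS2000} is tied to the specific combinatorics of their gadget, not to an abstract density property that would survive projection through $\sqrt{\log n}$ levels. Neither the multiplicative nor the additive recursion you describe is established anywhere, and there is no evident mechanism for ``projecting'' a host for $G'_{k+1}$ onto a host for $G'_k$ while losing only a bounded factor. Without that lemma the proposal is a plan, not a proof.

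The paper avoids recursion altogether. The construction is a \emph{disjoint union} of $h\approx n/2^{k}$ i.i.d.\ random gadgets $U^{(i)}$, each a depth-$k$ complete binary tree with a uniformly random Hamilton cycle on its $2^k$ leaves. The factor $\exp(c\sqrt{\log n})$ comes not from iteration but from optimizing \emph{two} parameters at once: the tree depth $k$ and a degree threshold $d$ for the host. A counting lemma bounds the probability that a random $U_k$ embeds into any max-degree-$d$ graph with its root at a prescribed vertex by $d^{O(2^k)}/(2^k-1)!$; a union bound over all degree-$d$ graphs on $d^{k+1}$ vertices then shows that, with positive probability, no $r\approx d^{k+2}$ of the $U^{(i)}$ can simultaneously root at a common vertex of any such graph. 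Given a host $G$ with at most $n\exp(c\sqrt{\log n})$ edges, the $2$-coloring is \emph{explicit}: color red every edge touching a vertex of degree $>d$ and every edge touching a small exceptional set $V_r$ (found by pigeonhole), and color the rest blue. The blue subgraph has maximum degree $\le d$ and, by construction, omits some $U^{(i_0)}$; the red subgraph is then shown to be too sparse away from the high-degree vertices to accommodate all $h$ gadgets. Balancing $d^{O(2^k)}$ against $(2^k-1)!$ and keeping $d^{k}\ll n$ forces $k\asymp\log d\asymp\sqrt{\log n}$, which is the source of the exponent.
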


\section{Proof of Theorem~\ref{alijnfaifjnfiwjn}}

\begin{defi}
Let $k\geq 2$ be an integer parameter. We
define a labeled random graph $U_k=(V_k,E_k)$ as follows.
Let $T = (V_k,E_T)$
be a complete rooted binary tree of depth $k$,
and let $V_L \subset V_k$
be its set of leaves.
Let $C = (V_L,E_C)$
be a spanning cycle on $V_L$ chosen uniformly at random.
Then we let $U_k$ be the union of $T$ and $C$,
namely the graph with vertex set $V_k$ and edge set $E_T \cup E_L$.
We refer to Figure~\ref{aljkfnalkfjnslkfjnlk} for a realization of $U_3$.
\end{defi}

\begin{Remark}
We will call the root of $T$
the {\bf root} of $U_k$. 
\end{Remark}

\begin{figure}[h]
\caption{A realization of $U_3$.}

\centering  

\label{aljkfnalkfjnslkfjnlk}

\subfigure
{
\begin{tikzpicture}[every node/.style={circle,thick,draw}]
    \node (1) at (0,-3) {root};
    \node (2) at (-2,-2) {};
    \node (3) at (2,-2) {};
    \node (4) at (-3,-1) {};
    \node (5) at (-1,-1) {};
    \node (6) at (1,-1) {};
    \node (7) at (3,-1) {};
    \node (8) at (-3.5,1) {};
    \node (9) at (-2.5,1) {};
    \node (10) at (-1.5,1) {};
    \node (11) at (-0.5,1) {};
    \node (12) at (0.5,1) {};
    \node (13) at (1.5,1) {};
    \node (14) at (2.5,1) {};
    \node (15) at (3.5,1) {};
    \draw[blue, very thick] [-] (1) -- (2);
    \draw[blue, very thick] [-] (1) -- (3);
    \draw[blue, very thick] [-] (2) -- (4);
    \draw[blue, very thick] [-] (2) -- (5);
    \draw[blue, very thick] [-] (3) -- (6);
    \draw[blue, very thick] [-] (3) -- (7);
    \draw[blue, very thick] [-] (4) -- (8);
    \draw[blue, very thick] [-] (4) -- (9);
    \draw[blue, very thick] [-] (5) -- (10);
    \draw[blue, very thick] [-] (5) -- (11);
    \draw[blue, very thick] [-] (6) -- (12);
    \draw[blue, very thick] [-] (6) -- (13);
    \draw[blue, very thick] [-] (7) -- (14);
    \draw[blue, very thick] [-] (7) -- (15);
    \draw[blue, very thick] (8) to [bend left=70] (10);
    \draw[blue, very thick] (10) to [bend left=70] (12);
    \draw[blue, very thick] (12) to [bend left=60] (14);
    \draw[blue, very thick] (11) to [bend left=70] (14);
    \draw[blue, very thick] (9) to [bend left=60] (11);
    \draw[blue, very thick] (9) to [bend left=70] (15); 
    \draw[blue, very thick] (13) to [bend left=70] (15);
    \draw[blue, very thick] (8) to [bend left=70] (13);    
    \end{tikzpicture}
}

\end{figure}
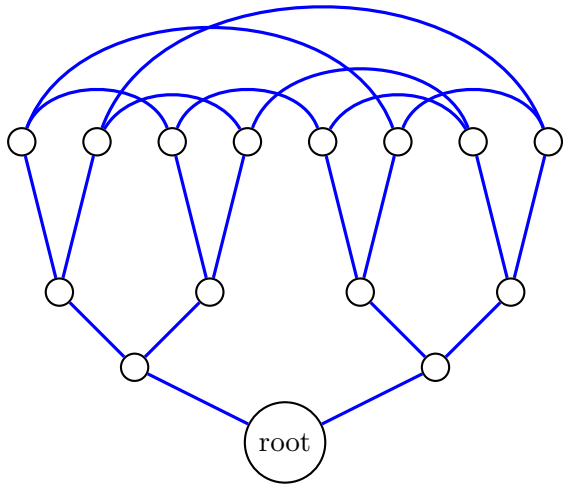

\begin{lemma}
Let $U_k=(V_k,E_k)$ be as above,
and let $G=(V_G,E_G)$ be a non-random labeled graph of maximum degree $d$.
Fix any vertex $v\in V_G$. Then
$$
\Prob\big\{\mbox{There is an embedding of $U_k$ into $G$ mapping the root of $U_k$ into $v$}\big\}\leq 
\frac{d^{2^k-1}\cdot d^{2^{k+1}}}{(2^k-1)!}.
$$
\end{lemma}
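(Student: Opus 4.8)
The plan is to peel off the deterministic binary tree from the random cycle glued onto its leaves. Since $T=(V_k,E_T)$ is non-random, every embedding of $U_k$ into $G$ sending the root to $v$ restricts to an embedding $\psi\colon T\hookrightarrow G$ with $\psi(\mathrm{root})=v$, and such a $\psi$ extends to an embedding of all of $U_k$ precisely when $\psi$ carries every edge of the random cycle $C$ into $E_G$. So I would bound the number of candidate tree-embeddings $\psi$, bound for each fixed $\psi$ the probability that $C$ is ``compatible'' with it, and then take a union bound.

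For the first count, process $T$ level by level from the root: the root is pinned to $v$, and each of the $2^{i+1}$ vertices on level $i+1$ must be sent to a neighbour in $G$ of the (already chosen) image of its parent, which is at most $d$ options per tree edge. Hence the number of homomorphisms of $T$ into $G$ fixing the root — in particular the number of embeddings — is at most $d^{|E_T|}=d^{2^{k+1}-2}\le d^{2^{k+1}}$.

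Now fix one embedding $\psi\colon T\hookrightarrow G$. Being an embedding, $\psi$ is injective, so its restriction to the leaf set $V_L$ is a bijection onto a $2^k$-element set $W:=\psi(V_L)\subseteq V_G$. Consequently the pushforward $\psi(C)$ of the uniformly random spanning cycle $C$ on $V_L$ is a uniformly random Hamiltonian cycle on $W$, and ``$\psi$ extends to an embedding of $U_k$'' is exactly the event that $\psi(C)$ has all its edges in $E_G$. Therefore
$$
\Prob\big\{\psi\text{ extends to an embedding of }U_k\big\}=\frac{N_W}{M},
$$
where $N_W$ is the number of Hamiltonian cycles on $W$ with all edges in $E_G$, and $M=(2^k-1)!/2$ is the number of Hamiltonian cycles on a $2^k$-element set. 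To bound $N_W$, traverse such a cycle starting from a fixed vertex of $W$: each of the first $2^k-1$ steps moves to a neighbour in $G$ of the current vertex ($\le d$ choices) and the last step is forced, while each undirected cycle is traversed in two directions; thus $N_W\le d^{2^k-1}/2$ and $\Prob\{\psi\text{ extends}\}\le d^{2^k-1}/(2^k-1)!$. Summing over the at most $d^{2^{k+1}}$ choices of $\psi$ yields $d^{2^k-1}d^{2^{k+1}}/(2^k-1)!$ (the case $d=0$ being trivial), which is the claimed bound.

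The technical heart is the middle step. One has to use that $\psi$ is an \emph{embedding}, not merely a homomorphism, of the tree, so that $\psi|_{V_L}$ is injective and the pushforward of the uniform random cycle on $V_L$ is again uniform on $W$; and one has to notice that the orientation factors of $2$ in $N_W$ and in $M$ cancel, which is why $(2^k-1)!$ rather than $(2^k-1)!/2$ appears in the denominator. The remaining ingredients — the level-by-level count of tree embeddings, the greedy traversal bound on Hamiltonian cycles in a bounded-degree graph, and the final union bound — are routine.
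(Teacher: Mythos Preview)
Your proof is correct and follows essentially the same strategy as the paper: bound the number of rooted embeddings of the tree $T$ by $d^{|E_T|}\le d^{2^{k+1}}$, bound for each fixed tree embedding the probability that the random cycle on the leaves lands inside $E_G$ by $d^{2^k-1}/(2^k-1)!$, and union bound. The only cosmetic difference is that the paper obtains the cycle bound by generating the spanning cycle sequentially and multiplying the conditional probabilities $d/(2^k-1-i)$, whereas you phrase it as a uniform pushforward together with a traversal count of Hamiltonian cycles on $W$; the two computations are the same.
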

\begin{proof}
Let $T=(V_k,E_T)$ be the binary tree subgraph of $U_k$ from the definition, let $v_r$
be its root, and $V_L$ be its set of leaves.
We first claim that for any non-random embedding $\phi$ of $T$ into $G$ mapping $v_r$ into $v$,
$\phi$ can be extended to an embedding of $U_k$ into $G$ with probability at most
$$
\prod_{\ell=1}^{2^k-1}\frac{d}{\ell}=\frac{d^{2^k-1}}{(2^k-1)!}.
$$
This bound can be obtained by considering the following spanning cycle generation on $V_L$.
Let $v_0\in V_L$ be a fixed vertex of $V_L$. At the first step, a vertex $v_1\in V_L\setminus\{v_0\}$
is chosen uniformly at random; at the second step, $v_2$ is chosen uniformly on the set $V_L\setminus\{v_0,v_1\}$,
and so on. The cycle is given by the random set of edges $\{v_i,v_{(i+1)\mod |V_L|}\}$, $0\leq i\leq |V_L|-1$.
Then, conditioned on any realization of $v_1,\dots,v_i$,
the probability that $\phi(v_i)$ and $\phi(v_{i+1})$ are adjacent in $G$ is at most $\frac{d}{2^{k}-1-i}$,
and the claim follows.

To complete the proof of the lemma, it is sufficient to give upper bound on the number $N$ of 
embeddings $\phi$ of $T$ into $G$ mapping $v_r$ into $v$.
Since every vertex of $G$ has at most $d$ neighbors, a rough bound gives
$$
N\leq d^{2^1+2^2+\dots+2^k}\leq d^{2^{k+1}},
$$
and the result follows.
\end{proof}

As an immediate corollary, we get
\begin{cor}\label{dfajfnakfjkjnlfkjn}
Let $r\geq 1$, $k,d\geq 2$, and let
$U^{(1)},\dots,U^{(r)}$ be i.i.d copies of $U_k$. Then
\begin{align*}
\Prob\big\{&\mbox{There is
a labeled graph $G$ of maximum degree $d$
and a vertex $v$ of $G$ such that}\\
&\mbox{for each $i\leq r$ there is an embedding of $U^{(i)}$ into $G$ mapping the root of $U^{(i)}$ into $v$}\big\}\\
&\leq 
\bigg(\frac{d^{2^k-1}\cdot d^{2^{k+1}}}{(2^k-1)!}\bigg)^r\cdot \big(d^{k+1}\big)^{d\cdot d^{k+1}}.
\end{align*}
\end{cor}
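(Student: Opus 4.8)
The plan is to deduce the corollary from the Lemma by a union bound, the only step requiring a new idea being a reduction of the a priori infinite family of candidate pairs $(G,v)$ to a finite one by exploiting locality. The starting point is that every vertex of $U_k$ lies within distance $k$ of its root: each vertex of the binary tree $T$ is within distance $k$ of $v_r$ already in $T$, and the random cycle on the leaves only adds edges between vertices that are at distance $k$ from the root, so $\dist_{U_k}(v_r,u)\le k$ for every $u\in V_k$. Since an embedding maps edges to edges and hence cannot increase distances, any embedding $\phi$ of a copy of $U_k$ into $G$ with $\phi(v_r)=v$ satisfies $\phi(V_k)\subseteq B_G(v,k)$, the ball of radius $k$ around $v$; and because $G$ has maximum degree $d$, this ball has at most $1+d+\dots+d^k\le d^{k+1}$ vertices.

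Write $m:=d^{k+1}$. Given a pair $(G,v)$, set $H:=G[B_G(v,k)]$; padding with isolated vertices and relabelling (which changes none of the relevant embeddings, since composing an embedding with a bijection is again an embedding) we may assume that $H$ is a graph on vertex set $\{1,\dots,m\}$ of maximum degree at most $d$ with $v$ relabelled as $1$. Every embedding of a copy of $U_k$ into $G$ sending the root to $v$ then induces such an embedding into $H$ sending the root to $1$. Consequently the event in the statement is contained in $\bigcup_{H}\Event_H$, where $H$ ranges over all graphs on $\{1,\dots,m\}$ of maximum degree at most $d$ and
$$
\Event_H:=\big\{\text{for each }i\le r\text{ there is an embedding of }U^{(i)}\text{ into }H\text{ mapping the root of }U^{(i)}\text{ to }1\big\}.
$$
The number of such graphs $H$ is at most $m^{dm}$: each vertex $j$ has a neighbourhood of size at most $d$, and listing an ordered $d$-tuple of elements of $(\{1,\dots,m\}\setminus\{j\})\cup\{\ast\}$ with $\ast$ a placeholder surjects onto the admissible neighbourhoods of $j$, giving at most $m^d$ choices per vertex, hence at most $(m^d)^m=(d^{k+1})^{d\cdot d^{k+1}}$ graphs in total.

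It remains to bound $\Prob(\Event_H)$ for a fixed $H$. Since $U^{(1)},\dots,U^{(r)}$ are i.i.d., the events ``there is an embedding of $U^{(i)}$ into $H$ mapping the root to $1$'', $i=1,\dots,r$, are independent, and by the Lemma (applied to the non-random graph $H$ and the vertex $1$) each has probability at most $\frac{d^{2^k-1}\cdot d^{2^{k+1}}}{(2^k-1)!}$; hence $\Prob(\Event_H)\le\big(\frac{d^{2^k-1}\cdot d^{2^{k+1}}}{(2^k-1)!}\big)^r$. Multiplying by the bound $(d^{k+1})^{d\cdot d^{k+1}}$ on the number of graphs $H$ and applying the union bound gives the claim. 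The one genuine obstacle is the locality reduction: without restricting to bounded-degree graphs on $d^{k+1}$ labelled vertices one would have to sum over all labelled graphs, and the crude count $2^{\binom{m}{2}}$ dwarfs the target $m^{dm}$; once that reduction is in place everything is routine.
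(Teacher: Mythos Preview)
Your proof is correct and follows essentially the same route as the paper's: reduce via the radius-$k$ ball bound to graphs on a fixed vertex set of size $d^{k+1}$, crudely count such graphs by $(d^{k+1})^{d\cdot d^{k+1}}$, and combine the Lemma with independence of the $U^{(i)}$ via a union bound. Your write-up is simply more explicit about the locality step and the fixing of $v$ as vertex $1$.
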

\begin{proof}
The proof is accomplished via a union bound.
We first note that the event in question coincides with the event
\begin{align*}
\big\{&\mbox{There is
a labeled graph $G$ of maximum degree $d$ {\it{}on $d^{k+1}$ vertices}
and a vertex $v$ of $G$ such}\\
&\mbox{that for each $i\leq r$ there is an embedding of $U^{(i)}$ into $G$ mapping the root of $U^{(i)}$ into $v$}\big\}.
\end{align*}
Indeed, the claim follows by observing that in any graph of maximum degree $d$, any ball of radius $k$
contains at most $1+d+d^2+\dots+d^{k}\leq d^{k+1}$ vertices.
We further can assume that the graphs $G$ in the above event
have a common vertex set $V$. 
The total number of such graphs $G$ can be crudely bounded above by
$$
\big(d^{k+1}\big)^{d\cdot d^{k+1}},
$$
implying the result.
\end{proof}

\bigskip

Everything is ready for the proof of Theorem~\ref{alijnfaifjnfiwjn}.
Note that the result is trivial for small $n$ by choosing the constant $c$ in the statement of the theorem sufficiently small.
From now on, we assume that $n$ is a large integer. Let 
parameters $h\geq 1$, $1\leq r\leq h$, $k,d\geq 2$ be chosen as follows:
$$
d:=\big\lfloor\exp\big(\sqrt{\log n}/100\big)\big\rfloor;\quad
k=\big\lfloor\sqrt{\log n}/10\big\rfloor;\quad
r:= d\cdot d^{k+1};\quad
h:=\lfloor 2^{-k-1} n\rfloor.
$$
Let $U^{(1)},\dots,U^{(h)}$ be i.i.d copies of $U_k$ (on disjoint vertex sets),
and define a random graph $G'$ as the union of $U^{(1)},\dots,U^{(h)}$.
We will show that with a positive probability, $\hat r(G')\geq \exp\big(\sqrt{\log n}/1000\big)n$.
For every $r$--subset $S$ of $\{1,2,\dots,h\}$, let $\Event_S$ be the event
\begin{align*}
\Event_S:=
\big\{&\mbox{There is
a labeled graph $G$ of maximum degree $d$
and a vertex $v$ of $G$ such that}\\
&\mbox{for each $i\in S$ there is an embedding of $U^{(i)}$ into $G$ mapping the root of $U^{(i)}$ into $v$}\big\},
\end{align*}
and let $\Event$ be the intersection of the complements $\Event_S^c$, $|S|=r$, $S\subset \{1,2,\dots,h\}$.
In view of Corollary~\ref{dfajfnakfjkjnlfkjn} and our choice of parameters,
\begin{align*}
\Prob(\Event)
&\geq
1-\sum\limits_{S\subset \{1,\dots,h\},\,|S|=r}\Prob(\Event_S)\\
&\geq 1-{h\choose r}\bigg(\frac{d^{2^k-1}\cdot d^{2^{k+1}}}{(2^k-1)!}\bigg)^r\cdot \big(d^{k+1}\big)^{d\cdot d^{k+1}}\\
&\geq
1- \bigg(\frac{eh\cdot d^{2^k-1}\cdot d^{2^{k+1}}}{d(2^k-1)!}\bigg)^r\\
&\geq
1- \bigg(\frac{e\cdot n\cdot 
\exp\big(2^k-1+2^{k+2}\sqrt{\log n}/100\big)}{(2^k-1)^{2^k-1}}\bigg)^r
>0.
\end{align*}
Condition on any realization of $G'$ from $\Event$.
Let $G=(V,E)$ be any graph with at most $\exp\big(\sqrt{\log n}/1000\big)\,n$ edges.
We will show that there is a $2$--coloring of the edges of $G$ such that $G$ does not contain a monochromatic
copy of $G'$.

Denote by $E_{high}\subset E$ the collection of all edges of $G$ incident to vertices of degree
at least $d+1$, and let $\tilde G$ be the subgraph of $G$ obtained by
removing $E_{high}$ from the edge set of $G$. Observe that the maximum degree of $\tilde G$
is at most $d=\lfloor\exp\big(\sqrt{\log n}/100\big)\rfloor$.
By the definition of $\Event$, for every vertex $v$ of 
$\tilde G$ there are at most $r-1$ indices $i\leq h$ such that $U^{(i)}$ can be embedded into $\tilde G$
with the root of $U^{(i)}$ mapped into $v$.
Since the number of non-isolated
vertices of $\tilde G$ is at most $2\exp\big(\sqrt{\log n}/1000\big)\,n$
we get that there exists an index $i_0\leq h$ and a subset of vertices
$V_r$ of $\tilde G$ of size at most 
$r\cdot 2\exp\big(\sqrt{\log n}/1000\big)\,n/h$,
such that $U^{(i_0)}$ can be embedded into $\tilde G$
only when mapping the root of $U^{(i_0)}$ into one of vertices in $V_r$.

At this stage, we can define a coloring of $G$.
Color all edges from $E_{high}$ as well as all edges incident to $V_r$ red, and all other edges blue,
and denote the corresponding sets of edges by $E_{red}$ and $E_{blue}$, respectively.
Note that the blue subgraph $G_{blue}=(V,E_{blue})$ of $G$ is also a subgraph of $\tilde G$, and $G_{blue}$
cannot contain a copy of $G'$ since, by our construction,
it does not contain a copy of $U^{(i_0)}$.
Assume for a moment that $G'$ is embeddable into subgraph $G_{red}=(V,E_{red})$,
and let $\phi:G'\to G_{red}$ be an embedding.
Every edge from $E_{red}\setminus E_{high}$ is incident to a vertex in $V_r$ which has degree at most $d$ in $G$,
and therefore
\begin{align*}
|E_{red}\setminus E_{high}|&\leq d\cdot r\cdot 2\exp\big(\sqrt{\log n}/1000\big)\,n/h\\
&\leq 4\exp\big((k+3)\sqrt{\log n}/100+\sqrt{\log n}/1000\big)\,2^{k+1}
< h/2.
\end{align*}
Let $I\subset\{1,2,\dots,h\}$ be the subset of all indices $i$ such that $\phi(U^{(i)})$
contains an edge from $E_{red}\setminus E_{high}$. Then, by the above,
$$
|I|< h/2.
$$
For every $i\in \{1,2,\dots,h\}\setminus I$, the edge set of the graph $\phi(U^{(i)})$ 
is entirely comprised by $E_{high}$ and, in particular,
more than $2^{k-1}$ vertices of $\phi(U^{(i)})$ have degree at least $d+1$ in $G$.
Thus, the total number of vertices in $G$ of degree $d+1$ or larger can be estimated from below by
$$
(h-|I|)\cdot 2^{k-1}
\geq
\frac{h}{2}\cdot 2^{k-1}.
$$
On the other hand, the number of such vertices cannot be greater than
$$
\frac{2\exp\big(\sqrt{\log n}/1000\big)\,n}{d}.
$$
We get the inequality
$$
\frac{h}{2}\cdot 2^{k-1}\leq \frac{2\exp\big(\sqrt{\log n}/1000\big)\,n}{d},
$$
which is clearly false. The contradiction shows that $G'$ cannot be embedded into $G_{red}$, and the result follows.

\end{document}